\newcommand{\half}{\ensuremath{\protect\tfrac{1}{2}}}
\newcommand{\quarter}{\ensuremath{\protect\tfrac{1}{4}}}
\newcommand{\third}{\ensuremath{\protect\tfrac{1}{3}}}
\newcommand{\FLOOR}[1]{\ensuremath{\protect\left\lfloor#1\right\rfloor}}
\theoremstyle{plain}
\newtheorem{theorem}{Theorem}
\newtheorem{lemma}{Lemma}
\begin{document}

\title{\vspace*{-4ex}\textbf{The chromatic number of the\\ convex segment
    disjointness graph}\footnote{Presented at the XIV Spanish Meeting
    on Computational Geometry Alcal\'a de Henares, Spain, June 27--30,
    2011}\\[1ex]{\it\large Dedicat al nostre amic i mestre Ferran Hurtado}
}

\author{Ruy Fabila-Monroy\footnote{Departamento de Matem\'aticas,
    Centro de Investigaci\'on y Estudios Avanzados del Instituto
    Polit\'ecnico Nacional, M\'exico, D.F., M\'exico
    (\texttt{ruyfabila@math.cinvestav.edu.mx}). Supported by an
    Endeavour Fellowship from the Department of Education, Employment
    and Workplace Relations of the Australian Government.}  \and David
  R. Wood\footnote{Department of Mathematics and Statistics, The
    Univesity of Melbourne, Melbourne, Australia
    (\texttt{woodd@unimelb.edu.au}). Supported by a QEII Fellowship
    from the Australian Research Council.}}

\maketitle

\begin{abstract}
  Let $P$ be a set of $n$ points in general and convex position in the
  plane.  Let $D_n$ be the graph whose vertex set is the set of all
  line segments with endpoints in $P$, where disjoint segments are
  adjacent.  The chromatic number of this graph was first studied by
  Araujo~et~al.~[\emph{CGTA}, 2005]. The previous best bounds are
  $\frac{3n}{4}\leq \chi(D_n) <n-\sqrt{\frac{n}{2}}$ (ignoring lower
  order terms). In this paper we improve the lower bound to
  $\chi(D_n)\geq n-\sqrt{2n}$, to conclude a near-tight bound on
  $\chi(D_n)$.
\end{abstract}

\section{Introduction}

Throughout this paper, $P$ is a set of $n>3$ points in general and
convex position in the plane.  The \emph{convex segment disjointness
  graph}, denoted by $D_n$, is the graph whose vertex set is the set
of all line segments with endpoints in $P$, where two vertices are
adjacent if the corresponding segments are disjoint. Obviously $D_n$
does not depend on the choice of $P$.  This graph and other related
graphs, were introduced by Araujo, Dumitrescu, Hurtado, Noy and
Urrutia~\cite{ADHNU-CGTA}, who proved the following bounds on the
chromatic number of $D_n$:
\begin{equation*}
  \label{eqn:ADHNU}
  2\FLOOR{\third(n+1)}-1\leq \chi(D_n) <n-\half\FLOOR{\log n}\enspace.
\end{equation*}
Both bounds were improved by Dujmovi\'c and Wood~\cite{Antithickness}
to
\begin{equation*}
  \tfrac{3}{4}(n-2)\leq \chi(D_n) 
  <n-\sqrt{\half n}-\half(\ln n)+4\enspace.
\end{equation*}
In this paper we improve the lower bound to conclude near-tight bounds
on $\chi(D_n)$.
\begin{theorem}\label{thm:main}
$$n-\sqrt{2n+\quarter}+\half \leq \chi(D_n)
<n-\sqrt{\half n}-\half(\ln n)+4\enspace.$$
\end{theorem}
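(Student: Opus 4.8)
The plan is to reduce the lower bound to a purely combinatorial statement via a Helly-type observation. Fix the ``cut'' of the convex polygon between vertices $n$ and $1$, and identify the segment with endpoints $i<j$ with the integer interval $[i,j]$. The key point is that two segments are \emph{non}-adjacent in $D_n$ (they cross or share an endpoint) precisely when the two intervals overlap as sets but are not strictly nested with distinct endpoints; in particular, within any colour class the intervals pairwise overlap as sets, so by the one–dimensional Helly theorem every colour class has a common vertex. Assign to each colour class such a common vertex $v$, and let $T_v$ be the union of the classes assigned to $v$. One checks that the subgraph of $D_n$ induced by the segments spanning $v$ is exactly the comparability graph of the ``nesting-around-$v$'' partial order, hence perfect, so the number of classes assigned to $v$ is at least the length $\omega_v(T_v)$ of the longest nested chain inside $T_v$; conversely any partition $\{T_v\}$ with $[i,j]\in T_v$ only if $i\le v\le j$ produces a colouring using $\sum_v\omega_v(T_v)$ colours. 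This gives the identity
$$\chi(D_n)=\min_{\{T_v\}}\ \sum_{v=1}^{n}\omega_v(T_v),$$
the minimum ranging over all such valid assignments; I verified it on small cases ($\chi(D_4)=2$, $\chi(D_5)=3$).

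It then suffices to show every valid assignment has $\sum_v\omega_v(T_v)\ge n-\sqrt{2n+\tfrac14}+\tfrac12$, which rearranges to $\binom{n-\chi(D_n)+1}{2}\le n$. Write $E$ for the set of \emph{empty} vertices ($T_v=\emptyset$). Since $\omega_v\ge1$ whenever $T_v\neq\emptyset$, we have $\sum_v\omega_v\ge n-|E|$, so the naive hope is that $|E|$ is always $O(\sqrt n)$; but this fails — a ``tournament orientation'' of the segments leaves just one empty vertex, yet sending every segment to an odd-indexed vertex leaves $\lfloor n/2\rfloor$ empty vertices, and indeed $|E|$ can be this large. The resolution is that when $|E|$ is large the $T_v$ are \emph{forced} to carry long nested chains, so the excess $\sum_{\omega_v\ge2}(\omega_v-1)=\sum_v\omega_v-(n-|E|)$ grows to compensate, and one must track both terms together.

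The mechanism is a local ``nesting pressure'': if $v$ is empty, its incident unit intervals $[v-1,v]$ and $[v,v+1]$ must go to $T_{v-1}$ and $T_{v+1}$; and any interval $[a,b]$ with $a<v-1$, $b>v$ placed in $T_{v-1}$ would strictly contain $[v-1,v]$ with distinct endpoints, creating a chain of length $2$ there. Hence wide intervals straddling empty vertices cannot be absorbed by the immediate neighbours and cascade outward, and intervals straddling several empty vertices pile up into long chains far away. Organising this bookkeeping — charging suitable pairs of empty vertices either to distinct host vertices or to units of chain-excess — should yield $\binom{|E|-\mathrm{excess}+1}{2}\le n$, which with $\sum_v\omega_v=n-|E|+\mathrm{excess}$ gives the theorem. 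The hard part is exactly this final step: every crude charging I tried only recovers $\chi(D_n)\ge n/2$, and obtaining the \emph{triangular} packing constant (the precise bound $\binom{n-\chi+1}{2}\le n$, rather than a weaker $n-\chi=O(\sqrt n)$) requires carefully exploiting how the nested chains forced by different clusters of empty vertices interact.
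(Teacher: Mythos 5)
Your reduction is correct as far as it goes: colour classes of $D_n$ are sets of pairwise intersecting segments, their intervals pairwise overlap, Helly gives a common vertex, and the segments through a fixed vertex do induce the comparability graph of strict nesting, so the identity $\chi(D_n)=\min_{\{T_v\}}\sum_v\omega_v(T_v)$ holds and your target inequality $\binom{n-\chi+1}{2}\le n$ is exactly equivalent to the bound $\binom{n}{2}\le \chi n-\binom{\chi}{2}$ that the theorem needs. But the proof stops precisely where the theorem lives. The entire content of the lower bound is the triangular term $\binom{k}{2}$, and your final paragraph is an announced programme (``organising this bookkeeping \dots should yield'') rather than an argument; you say yourself that every charging you tried only recovers $\chi(D_n)\ge n/2$, which is weaker even than the previously known $\tfrac34(n-2)$. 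The local ``nesting pressure'' mechanism is genuinely insufficient: it only sees interactions between an empty vertex and its neighbours, whereas the $\binom{k}{2}$ term comes from a \emph{global pairwise} interaction among all $k$ colour classes, one unit of overcounting for each unordered pair of classes. No analysis of isolated clusters of empty vertices can produce a quadratic-in-$k$ saving without identifying, for every pair of classes, a distinct witness.

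For comparison, the paper gets that witness by changing objects: each colour class is extended to a \emph{maximal} convex thrackle, which has exactly $n$ edges, and the key lemma (proved either topologically, via the fact that two free $\mathbb{Z}_2$-actions on $S^1$ agree somewhere, or by a short combinatorial argument about alternating cycles of wedge containments) is that any two maximal convex thrackles share an edge. Distinct pairs share distinct edges, so $k$ maximal thrackles cover at most $kn-\binom{k}{2}$ edges; since they must cover all $\binom{n}{2}$, the quadratic formula finishes. If you want to salvage your framework, the statement you would need is of the same flavour: for any two indices $v\neq w$ with $T_v,T_w\neq\emptyset$ you must exhibit either a forced extra unit of $\sum\omega$ or a ``collision'' chargeable to the pair $\{v,w\}$, and these witnesses must be distinct across pairs. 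As written, the proposal has a genuine gap at its central step and does not establish the lower bound; the upper bound (which the paper imports from prior work) is not addressed at all.
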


The proof of Theorem~\ref{thm:main} is based on the observation that
eah colour class in a colouring of $D_n$ is a convex thrackle. We then
prove that two maximal convex thrackles must share an edge in
common. From this we prove a tight upper bound on the number of edges
in the union of $k$ maximal convex thrackles. Theorem~\ref{thm:main}
quickly follows.

\section{Convex thrackles}

A \emph{convex thrackle} on $P$ is a geometric graph with vertex set
$P$ such that every pair of edges intersect; that is, they have a
common endpoint or they cross. Observe that a geometric graph $H$ on
$P$ is a convex thrackle if and only if $E(H)$ forms an independent
set in $D_n$. A convex thrackle is \emph{maximal} if it is
edge-maximal.  As illustrated in Figure~\ref{fig:thrackle}(a), it is
well known and easily proved that every maximal convex thrackle $T$
consists of an odd cycle $C(T)$ together with some degree $1$ vertices
adjacent to vertices of $C(T)$; see
\cite{CN-DCG00,CN09,CN10,Antithickness,FS35,HopfPann34,LPS-DCG97,Woodall-Thrackles}. In
particular, $T$ has $n$ edges. For each vertex $v$ in $C(T)$, let
$W_T(v)$ be the convex wedge with apex $v$, such that the boundary
rays of $W_T(v)$ contain the neighbours of $v$ in $C(T)$. Every
degree-1 vertex $u$ of $T$ lies in a unique wedge and the apex of this
wedge is the only neighbour of $u$ in $T$.

\begin{figure}[H]
  \begin{center}
    \includegraphics[width=\textwidth]{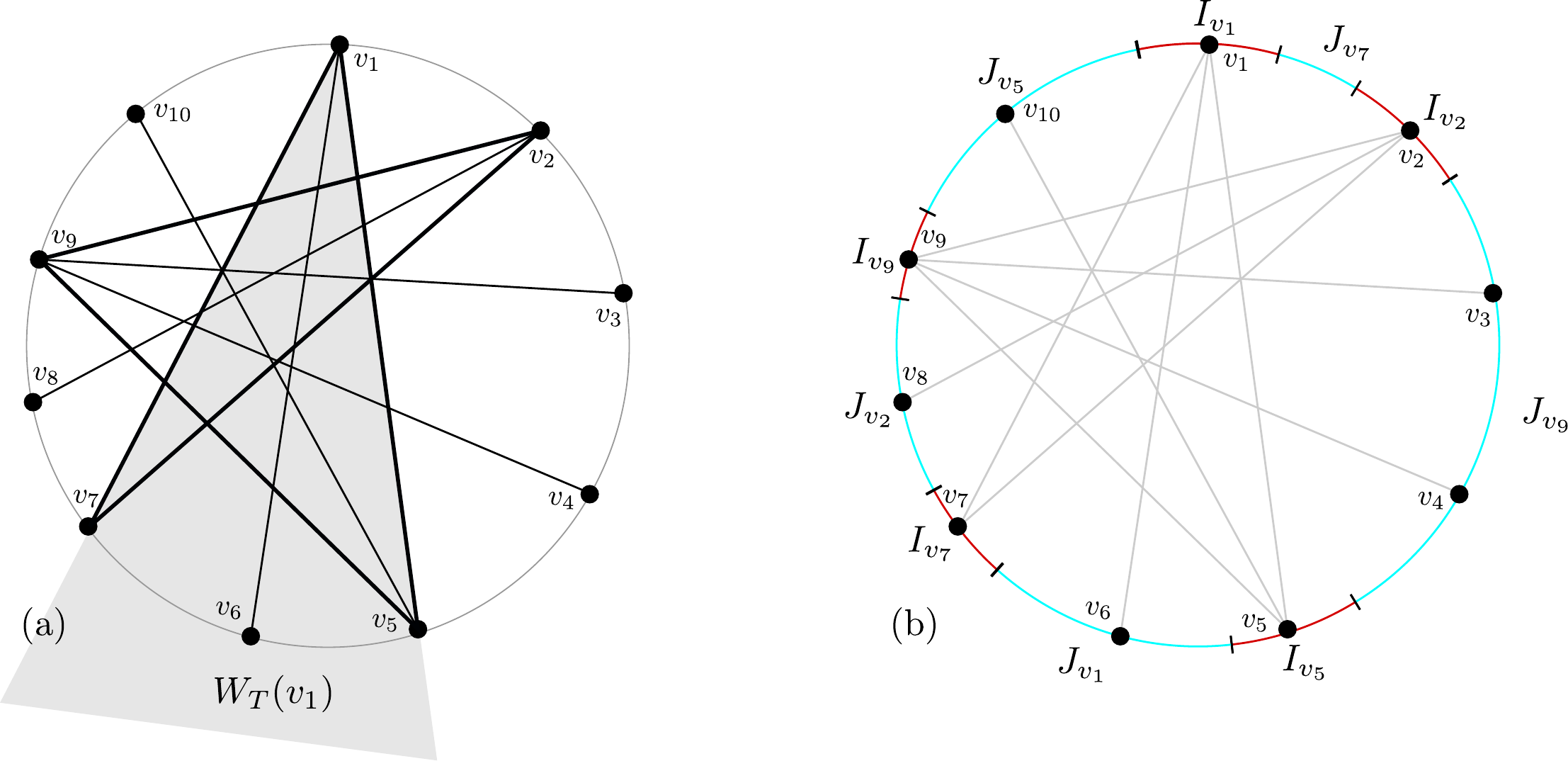}
  \end{center}
  \caption{(a) maximal convex thrackle, (b) the intervals pairs
    $(I_u,J_u)$} \label{fig:thrackle}
\end{figure}

\section{Convex thrackles and free $\mathbb{Z}_2$-actions of $S^1$}

A \emph{$\mathbb{Z}_2$-action} on the unit circle $S^1$ is a
homeomorphism $f:S^1\rightarrow S^1$ such that $f(f(x))=x$ for all
$x\in S^1$. We say that $f$ is \emph{free} if $f(x)\neq x$ for all
$x\in S^1$.
% A continuous function $f:S^1\longrightarrow \mathbb{R}$ is an
% \emph{antipodal mapping} with respect to a $\mathbb{Z}_2$-action
% $\nu$ if $f(x)=-f(\nu(x))$ for all $x\in S^1$.
 
\begin{lemma}\label{lem:agreepoint}
  If $f$ and $g$ are free $\mathbb{Z}_2$-actions of $S^1$, then
  $f(x)=g(x)$ for some point $x \in S^1$.
\end{lemma}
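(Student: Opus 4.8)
The plan is to pass to the universal cover $\mathbb{R}\to S^1=\mathbb{R}/\mathbb{Z}$ and argue by monotonicity. Equivalently, since $g=g^{-1}$, the equation $f(x)=g(x)$ is the same as $g(f(x))=x$, so it suffices to show that the homeomorphism $h=g\circ f$ has a fixed point. First I would record the elementary fact that an orientation-reversing self-homeomorphism of $S^1$ always has a fixed point (lift it to $\tilde h$ with $\tilde h(x+1)=\tilde h(x)-1$ and apply the intermediate value theorem to $\tilde h(x)-x$, which decreases by $2$ over a period), and conclude that a free $\mathbb{Z}_2$-action is necessarily orientation-preserving; in particular $f$ and $g$ are orientation-preserving.

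Next I would choose lifts $\tilde f,\tilde g\colon\mathbb{R}\to\mathbb{R}$, which are increasing homeomorphisms commuting with the translation $x\mapsto x+1$. Because $f^2=\mathrm{id}$, each satisfies $\tilde f\circ\tilde f=\mathrm{id}+m$ for some integer $m$; if $m$ were even, say $m=2\ell$, then $\tilde f-\ell$ would be an increasing involution of $\mathbb{R}$, hence the identity, forcing $f=\mathrm{id}$, contrary to freeness. So $m$ is odd, and after replacing $\tilde f$ by $\tilde f-\tfrac{m-1}{2}$ we may assume $\tilde f\circ\tilde f=\mathrm{id}+1$, and likewise $\tilde g\circ\tilde g=\mathrm{id}+1$. (This also gives $x<\tilde f(x)<x+1$ and $x<\tilde g(x)<x+1$ for all $x$, though the computation below does not strictly need it.)

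Now suppose for contradiction that $f(x)\neq g(x)$ for every $x\in S^1$; then $\tilde f(x)\neq\tilde g(x)$ for all $x\in\mathbb{R}$, and since $\tilde f-\tilde g$ is continuous, $1$-periodic and nowhere zero it has constant sign, say $\tilde f(x)>\tilde g(x)$ for all $x$ after swapping names if needed. Applying the increasing map $\tilde g$ gives $\tilde g(\tilde f(x))>\tilde g(\tilde g(x))=x+1$; applying the increasing map $\tilde f$ gives $\tilde f(\tilde g(x))<\tilde f(\tilde f(x))=x+1$, and then applying $\tilde g$ once more yields $\tilde g(\tilde f(\tilde g(x)))<\tilde g(x+1)=\tilde g(x)+1$. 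On the other hand, substituting $\tilde g(x)$ for $x$ in the first inequality gives $\tilde g(\tilde f(\tilde g(x)))>\tilde g(x)+1$, a contradiction. Hence $\tilde f(x_0)=\tilde g(x_0)$ for some $x_0$, so $f$ and $g$ agree at the image of $x_0$ in $S^1$. I expect the only delicate points to be the bookkeeping with the lifts — legitimizing the normalization $\tilde f\circ\tilde f=\mathrm{id}+1$ and keeping orientation under control — since a more naive attack via rotation numbers stumbles on the fact that the rotation number of $g\circ f$ is not the sum of those of $f$ and $g$, whereas the monotonicity computation above sidesteps this entirely.
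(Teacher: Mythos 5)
Your proof is correct and complete, and it takes a genuinely different route from the paper's. The paper works directly on the circle: starting from a point $x_0$ with $f(x_0)\neq g(x_0)$, it parametrises the clockwise arc from $x_0$ to $g(x_0)$, shows that $g$ carries this arc onto the complementary arc while the $f$-image trails it in clockwise order, deduces that the $f$-image of the arc must pass through $x_0$, and then uses $f\circ f=\mathrm{id}$ to force $f(x_0)$ back into the original arc, a contradiction. You instead lift to the universal cover: the preliminary fact that a free involution of $S^1$ must preserve orientation, the parity argument legitimising the normalisation $\tilde f\circ\tilde f=\mathrm{id}+1$ (an even $m$ would make $\tilde f-\ell$ an increasing involution of $\mathbb{R}$, hence the identity), and the four-step monotonicity chase are all sound; your caveat that rotation numbers do not add under composition, so a naive fixed-point argument for $g\circ f$ would stall, is also well placed. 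What your route buys is rigour and robustness: the orientation and cyclic-order bookkeeping that the paper handles with ``without loss of generality'' clauses and informal arc-chasing (for instance the step asserting $f(p([0,1]))=\overrightarrow{f(x_0)f(p(1))}$) becomes a chain of transparent inequalities between increasing functions on $\mathbb{R}$. What the paper's route buys is that it never leaves $S^1$ and needs no covering-space machinery, keeping the argument self-contained for its audience. Both are, at bottom, intermediate-value-theorem arguments, and either one supports the application in Lemma~\ref{lem:common_edge} equally well.
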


\begin{proof}
  For points $x,y\in S^1$, let $\overrightarrow{xy}$ be the clockwise
  arc from $x$ to $y$ in $S^1$.  Let $x_0\in S^1$. If $f(x_0)=g(x_0)$
  then we are done. Now assume that $f(x_0)\neq g(x_0)$. Without loss
  of generality, $x_0,g(x_0),f(x_0)$ appear in this clockwise order
  around $S^1$.  Paramaterise $\overrightarrow{x_0g(x_0)}$ with a
  continuous injective function $p:[0,1]\rightarrow
  \overrightarrow{x_0g(x_0)}$, such that $p(0)=x_0$ and
  $p(1)=g(x_0)$. Assume that $g(p(t))\neq f(p(t))$ for all
  $t\in[0,1]$, otherwise we are done. Since $g$ is free, $p(t)\neq
  g(p(t))$ for all $t\in[0,1]$. Thus
  $g(p([0,1]))=\overrightarrow{g(p(0))g(p(1))}=\overrightarrow{g(x_0)x_0}$.
  Also $f(p([0,1]))=\overrightarrow{f(x_0)f(p(1))}$, as otherwise
  $g(p(t))= f(p(t))$ for some $t\in[0,1]$.  This implies that
  $p(t),g(p(t)),f(p(t))$ appear in this clockwise order around
  $S^1$. In particular, with $t=1$, we have $f(p(1))\in
  \overrightarrow{x_0g(x_0)}$. Thus
  $x_0\in\overrightarrow{f(x_0)f(p(1))}$.  Hence $x_0=f(p(t))$ for
  some $t\in[0,1]$.  Since $f$ is a $\mathbb{Z}_2$-action,
  $f(x_0)=p(t)$. This is a contradiction since
  $p(t)\in\overrightarrow{x_0g(x_0)}$ but
  $f(x_0)\not\in\overrightarrow{x_0g(x_0)}$.
\end{proof}

Assume that $P$ lies on $S^1$.  Let $T$ be a maximal convex thrackle
on $P$.  As illustrated in Figure~\ref{fig:thrackle}(b), for each
vertex $u$ in $C(T)$, let $(I_u,J_u)$ be a pair of closed intervals of
$S^1$ defined as follows. Interval $I_u$ contains $u$ and bounded by
the points of $S^1$ that are $1/3$ of the way towards the first points
of $P$ in the clockwise and anticlockwise direction from $u$.  Let $v$
and $w$ be the neighbours of $u$ in $C(T)$, so that $v$ is before $w$
in the clockwise direction from $u$. Let $p$ be the endpoint of $I_v$
in the clockwise direction from $v$. Let $q$ be the endpoint of $I_w$
in the anticlockwise direction from $w$. Then $J_u$ is the interval
bounded by $p$ and $q$ and not containing $u$.  Define $f_T:S^1
\longrightarrow S^1$ as follows. For each $v\in C(T)$, map the
anticlockwise endpoint of $I_v$ to the anticlockwise endpoint of
$J_v$, map the clockwise endpoint of $I_v$ to the clockwise endpoint
of $J_v$, and extend $f_T$ linearly for the interior points of $I_v$
and $J_v$, such that $f_T(I_v)=J_v$ and $f_T(J_v)=I_v$. Since the
intervals $I_v$ and $J_v$ are disjoint, $f_T$ is a free
$\mathbb{Z}_2$-action of $S^1$.

\begin{lemma}\label{lem:common_edge}
  Let $T_1$ and $T_2$ be maximal convex thrackles on $P$, such that
  $C(T_1)\cap C(T_2)=\emptyset$.  Then there is an edge in $T_1\cap
  T_2$, with one endpoint in $C(T_1)$ and one endpoint in $C(T_2)$.
\end{lemma}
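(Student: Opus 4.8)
The plan is to feed the two free $\mathbb{Z}_2$-actions $f_{T_1}$ and $f_{T_2}$ into Lemma~\ref{lem:agreepoint}, obtaining a point $x\in S^1$ with $y:=f_{T_1}(x)=f_{T_2}(x)$, and then decode this coincidence combinatorially. First I would record two facts valid for every maximal convex thrackle $T$ on $P$. (i) The closed arcs $\{I_v\}_{v\in C(T)}\cup\{J_v\}_{v\in C(T)}$ cover $S^1$ and $f_T$ interchanges $I_v$ with $J_v$; this is implicit in the construction of $f_T$ above. (ii) The arc $I_u$ depends only on $u\in P$ (not on $T$), it contains no point of $P$ other than $u$, and $I_u\cap I_{u'}=\emptyset$ for distinct $u,u'\in P$ — since $I_u$ reaches only one third of the way towards the $P$-points adjacent to $u$ on $S^1$, so two such arcs leave a gap between them.

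Next I would locate $x$ and $y$. By (i) for $T_1$, one of $x,y$ lies in $I_a$ and the other in $J_a$ for some $a\in C(T_1)$ (if $x$ lies in the interior of some $J_a$ then $y=f_{T_1}(x)$ lies in $I_a$; otherwise $x$ itself lies in some $I_a$). Relabel so that $x\in I_a$ and $y\in J_a$. Likewise one of $x,y$ lies in $I_b$ and the other in $J_b$ for some $b\in C(T_2)$, and $a\neq b$ because $C(T_1)\cap C(T_2)=\emptyset$. If $x\in I_b$, then $x\in I_a\cap I_b$, contradicting (ii); hence $y\in I_b$ and $x\in J_b$. So $x\in I_a\cap J_b$ and $y\in I_b\cap J_a$.

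The crux — the step I expect to be the main obstacle — is the following local claim: if $T$ is a maximal convex thrackle, $v\in C(T)$ and $z\in P\setminus C(T)$ with $I_z\cap J_v\neq\emptyset$, then $zv\in E(T)$. To see this, observe that each of the two endpoints of $J_v$ lies in $I_w$ for a $C(T)$-neighbour $w$ of $v$; since $w\neq z$, fact (ii) shows that neither endpoint of $J_v$ lies in $I_z$. As $I_z$ is a connected arc that meets $J_v$ but contains no boundary point of $J_v$, it must be contained in the interior of $J_v$; in particular $z$ lies in the interior of $J_v$. By the structure of maximal convex thrackles recalled above (the odd cycle is a star polygon, so the two $C(T)$-neighbours of $v$ are consecutive among the vertices of $C(T)$ on $S^1$), the points of $P$ in the interior of $J_v$ are exactly the degree-$1$ vertices of $T$ lying in the wedge $W_T(v)$; each such vertex has $v$ as its unique neighbour in $T$, so $zv\in E(T)$.

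Finally I would apply the claim twice: with $(T,v,z)=(T_2,b,a)$, which is legitimate since $a\notin C(T_2)$, to get $ab\in E(T_2)$; and with $(T,v,z)=(T_1,a,b)$, legitimate since $b\notin C(T_1)$, to get $ab\in E(T_1)$. Hence $ab$ is an edge of $T_1\cap T_2$ with $a\in C(T_1)$ and $b\in C(T_2)$, which is the assertion. The only delicate points are the behaviour of the tilings at shared arc endpoints (handled by using closed arcs and the consistency of $f_T$ there) and the appeal in the key claim to the standard star-polygon description of the cycle of a maximal convex thrackle.
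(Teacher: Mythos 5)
Your proposal is correct and follows essentially the same route as the paper's topological proof: apply Lemma~\ref{lem:agreepoint} to $f_{T_1}$ and $f_{T_2}$, place the coincidence point in $I_a\cap J_b$ and its image in $J_a\cap I_b$ with $a\in C(T_1)$, $b\in C(T_2)$, and read off the common edge $ab$. Your ``crux'' claim merely spells out, via the observation that the endpoints of $J_v$ lie in the $I$-intervals of the cycle-neighbours of $v$, the step the paper states tersely as ``it follows that $I_u\subset J_v$ and $I_v\subset J_u$.''
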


\begin{proof}[Topological proof]
  By Lemma~\ref{lem:agreepoint}, there exists $x \in S^1$ such that
  $f_{T_1}(x)=y=f_{T_2}(x)$.  Let $u\in C(T_1)$ and $v \in C(T_2)$ so
  that $x \in I_u \cup J_u$ and $x \in I_v \cup J_v$, where
  $(I_u,J_u)$ and $(I_v,J_v)$ are defined with respect to $T_1$ and
  $T_2$ respectively. Since $C(T_1)\cap C(T_2)=\emptyset$, we have
  $u\neq v$ and $I_u\cap I_v=\emptyset$. Thus $x\not\in I_u\cap
  I_v$. If $x\in J_u\cap J_v$ then $y\in I_u\cap I_v$, implying
  $u=v$. Thus $x\not\in J_u\cap J_v$.  Hence $x\in(I_u\cap
  J_v)\cup(J_u\cap I_v)$.  Without loss of generality, $x\in I_u\cap
  J_v$. Thus $y\in J_u\cap I_v$. If $I_u\cap J_v=\{x\}$ then $x$ is an
  endpoint of both $I_u$ and $J_v$, implying $u\in C(T_2)$, which is a
  contradiction. Thus $I_u\cap J_v$ contains points other than $x$. It
  follows that $I_u\subset J_v$ and $I_v\subset J_u$.  Therefore the
  edge $uv$ is in both $T_1$ and $T_2$. Moreover one endpoint of $uv$
  is in $C(T_1)$ and one endpoint is in $C(T_2)$.
\end{proof}

\begin{proof}[Combinatorial Proof] 
  Let $H$ be the directed multigraph with vertex set $C(T_1)\cup
  C(T_2)$, where there is a \emph{blue} arc $uv$ in $H$ if $u$ is in
  $W_{T_1}(v)$ and there is a \emph{red} arc $uv$ in $H$ if $u$ is in
  $W_{T_2}(v)$. Since $C(T_1)\cap C(T_2)=\emptyset$, every vertex of
  $H$ has outdegree $1$. Therefore $|E(H)|=|V(H)|$ and there is a
  cycle $\Gamma$ in the undirected multigraph underlying $H$.  In
  fact, since every vertex has outdegree 1, $\Gamma$ is a directed
  cycle. By construction, vertices in $H$ are not incident to an
  incoming and an outgoing edge of the same color. Thus $\Gamma$
  alternates between blue and red arcs. The red edges of $\Gamma$ form
  a matching as well as the blue edges, both of which are
  thrackles. However, there is only one matching thrackle on a set of
  points in convex position.  Therefore $\Gamma$ is a 2-cycle and the
  result follows.
\end{proof}

\section{Main Results} \label{sec:lower_bound}

\begin{theorem} \label{thm:overcount} For every set $P$ of $n$ points
  in convex and general position, the union of $k$ maximal convex
  thrackles on $P$ has at most $k n-\binom{k}{2}$ edges.
\end{theorem}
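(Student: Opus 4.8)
The plan is to build the union of the $k$ thrackles one at a time, tracking how many *new* edges each thrackle can contribute. Let $T_1,\dots,T_k$ be the maximal convex thrackles, and for $i=1,\dots,k$ let $E_i = E(T_i)$. Since each maximal convex thrackle has exactly $n$ edges, $|E_i|=n$. I want to show
$$\Bigl|\,\bigcup_{i=1}^{k} E_i\,\Bigr| \;\le\; kn - \binom{k}{2}.$$
Rewriting, it suffices to show that $\sum_{i} |E_i| - |\bigcup_i E_i| \ge \binom{k}{2}$, i.e.\ that the total amount of ``overlap'' among the $k$ edge sets is at least $\binom{k}{2}$. The natural way to certify this is to exhibit, for every pair $\{i,j\}$ with $i<j$, a distinct edge that is counted (at least) twice — more precisely, to produce an injection from the set of pairs $\{i,j\}$ into the multiset of repeated incidences.

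Here is where Lemma~\ref{lem:common_edge} does the work, but it only applies when $C(T_i)\cap C(T_j)=\emptyset$, so the first step is to handle the general case. The idea is to choose, for each thrackle, a distinguished vertex of its odd cycle and use these to separate the pairs. Concretely, I would process $T_1,\dots,T_k$ in order and for each new thrackle $T_j$ bound the number of edges of $T_j$ not already present in $E_1\cup\dots\cup E_{j-1}$ by $n - (j-1)$; summing over $j$ gives $\sum_{j=1}^{k}(n-(j-1)) = kn - \binom{k}{2}$, which is exactly the bound. So the real claim to prove is: \emph{for each $j$, the thrackle $T_j$ shares at least $j-1$ distinct edges with $T_1,\dots,T_{j-1}$, one with each}. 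If all the cycles $C(T_i)$ were pairwise disjoint this would be immediate from Lemma~\ref{lem:common_edge}, since that lemma gives, for each $i<j$, an edge of $T_i\cap T_j$ with one endpoint in $C(T_i)$ and one in $C(T_j)$; the endpoint-location data would let me argue these $j-1$ edges are pairwise distinct (an edge with an endpoint in $C(T_i)$ and one in $C(T_j)$ cannot simultaneously be the witness edge for a different pair, as long as the cycles are disjoint). The content of the combinatorial proof of Lemma~\ref{lem:common_edge} (the directed-multigraph argument) suggests the right bookkeeping.

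The main obstacle is dealing with thrackles whose odd cycles intersect, since Lemma~\ref{lem:common_edge} is stated only for disjoint cycles. I expect to resolve this by a reduction: if $C(T_i)$ and $C(T_j)$ share a vertex $v$, then $v$ has degree $\ge 2$ in both, so the wedges $W_{T_i}(v)$ and $W_{T_j}(v)$ are both defined, and a short local argument at $v$ (comparing the two wedges and the edges of $C(T_i), C(T_j)$ emanating from $v$) should already produce a common edge incident to $v$ — indeed two maximal convex thrackles sharing a cycle vertex must share an edge at that vertex, because on points in convex position the cyclic structure around $v$ is rigid. Alternatively, one can contract or perturb to reduce to the disjoint case. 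Once every pair $\{i,j\}$ is assigned a common edge, the final step is the disjointness/injectivity check: I must verify that the edge assigned to $\{i,j\}$ is genuinely ``new overlap'' not already accounted for when processing earlier pairs, which is why I prefer the incremental formulation (bounding new edges of $T_j$ by $n-(j-1)$) — there the only thing to check is that the $j-1$ common edges $T_j$ shares with $T_1,\dots,T_{j-1}$ are pairwise distinct, and this follows from the endpoint-location information supplied by Lemma~\ref{lem:common_edge} together with the convex-position rigidity.
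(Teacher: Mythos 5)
Your reduction to the disjoint-cycles case is essentially the paper's base case: there, Lemma~\ref{lem:common_edge} supplies for each pair $\{i,j\}$ a common edge with one endpoint in $C(T_i)$ and one in $C(T_j)$, and pairwise disjointness of the cycles makes these witness edges distinct across pairs, giving the overcount of $\binom{k}{2}$. That part of your argument (whether phrased incrementally or all at once) is sound. The genuine gap is the case of intersecting cycles, which you dispose of with the claim that \emph{two maximal convex thrackles sharing a cycle vertex must share an edge at that vertex}. This claim is false. For a cycle vertex $v$ with cycle-neighbours $x$ and $y$, the edges of the thrackle at $v$ go exactly to the points of $P$ on the closed arc from $x$ to $y$ not containing $v$; two thrackles can put $v$ on their cycles with these two arcs disjoint. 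Concretely, label $P=\{1,\dots,n\}$ in convex position with $n\geq 6$, and let $T_1$ be the triangle $123$ together with all edges $2w$ ($w\neq 1,2,3$), and $T_2$ the triangle $145$ together with the edges $52$, $53$ and $4w$ ($w\geq 6$). Both are maximal convex thrackles, $1\in C(T_1)\cap C(T_2)$, but the neighbourhoods of $1$ are $\{2,3\}$ in $T_1$ and $\{4,5\}$ in $T_2$: there is no common edge at the shared cycle vertex. (The two thrackles do share the edge $25$, but it is not found by any local argument at $1$.)

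Your fallback, ``contract or perturb to reduce to the disjoint case,'' is precisely the missing content, and it is not routine: this is where the paper does its real work. The paper inducts on the number $r(\mathcal{T})$ of incidences $(v,T_i,T_j)$ with $v\in C(T_i)\cap C(T_j)$, and in the inductive step splits such a $v$ into two consecutive points $v'$, $v''$ of a set of $n+1$ points, assigning $v''$ to $T_j$ and $v'$ to all other thrackles, then re-maximalising each thrackle by adding exactly one edge. This decreases $r$ by one while increasing the total edge count by exactly $k$, so the bound $k(n+1)-\binom{k}{2}$ for the split configuration pulls back to $kn-\binom{k}{2}$ for the original. Note also that even if every intersecting pair could be shown to share \emph{some} edge, you would still owe the injectivity bookkeeping for those pairs (your endpoint-location argument for distinctness relies on the cycles being disjoint); the paper's splitting induction avoids ever having to do that bookkeeping in the non-disjoint situation. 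As written, your proposal proves the theorem only under the additional hypothesis that the odd cycles $C(T_1),\dots,C(T_k)$ are pairwise disjoint.
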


\begin{proof}
  For a set $\mathcal{T}$ of $k$ maximal convex thrackles on $P$,
  define
$$r(\mathcal{T}):=\left
  |\{(v,T_i,T_j): v \in C(T_i) \cap C(T_j), T_i,T_j \in \mathcal{T}
  \textrm{ and } T_i \neq T_j\} \right |\enspace.$$ The proof proceeds
by induction on $r(\mathcal{T})$.

Suppose that $r(\mathcal{T})=0$.  Thus $C(T_i)\cap C(T_j)=\emptyset$
for all distinct $T_i,T_j\in \mathcal{T}$.  By
Lemma~\ref{lem:common_edge}, $T_i$ and $T_j$ have an edge in common,
with one endpoint in $C(T_i)$ and one endpoint in $C(T_j)$.  Hence
distinct pairs of thrackles have distinct edges in common.  Since
every maximal convex thrackle has $n$ edges and we overcount at least
one edge for every pair, the total number of edges is at most
$kn-\binom{k}{2}$.

Now assume that $r(\mathcal{T}) >0$. Thus there is a vertex $v$ and a
pair of thrackles $T_i$ and $T_j$, such that $v \in C(T_i) \cap
C(T_j)$.  As illustrated in Figure~\ref{fig:split}, replace $v$ by two
consecutive vertices $v'$ and $v''$ on $P$, where $v'$ replaces $v$ in
every thrackle except $T_j$, and $v''$ replaces $v$ in $T_j$. Add one
edge to each thrackle so that it is maximal.  Let $\mathcal{T}'$ be
the resulting set of thrackles.  Observe that
$r(\mathcal{T'})=r(\mathcal{T})-1$, and the number of edges in
$\mathcal{T'}$ equals the number of edges in $\mathcal{T}$ plus
$k$. By induction, $\mathcal{T}'$ has at most $k (n+1)-\binom{k}{2}$
edges, implying $\mathcal{T}$ has at most $k n -\binom{k}{2}$ edges.
\end{proof}

\begin{figure}
  \begin{center}
    \includegraphics[scale=0.7]{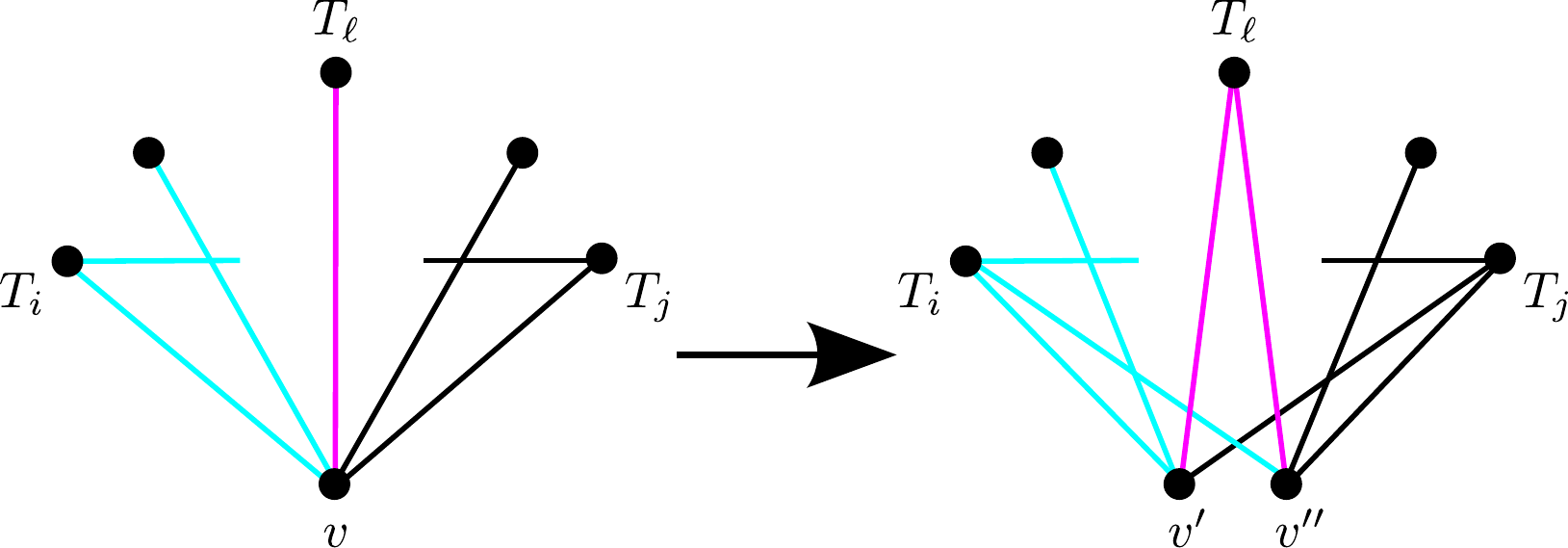}
  \end{center}
  \caption{\label{fig:split} Construction in the proof of
    Theorem~\ref{thm:overcount}.}
\end{figure}

We now show that Theorem~\ref{thm:overcount} is best possible for all
$n\geq 2k$. Let $S$ be a set of $k$ vertices in $P$ with no two
consecutive vertices in $S$. If $v\in S$ and $x,v,y$ are consecutive
in this order in $P$, then $T_v:=\{vw:w\in
P\setminus\{v\})\}\cup\{xy\}$ is a maximal convex thrackle, and
$\{T_v:v\in S\}$ has exactly $k n-\binom{k}{2}$ edges in total.

\begin{proof}[Proof of  Theorem~\ref{thm:main}]
  If $\chi(D_n)=k$ then, there are $k$ convex thrackles whose union is
  the complete geometric graph on $P$.  Possibly add edges to obtain
  $k$ maximal convex thrackles with $\binom{n}{2}$ edges in total.  By
  Theorem~\ref{thm:overcount}, $\binom{n}{2}\leq kn-\binom{k}{2}$.
  The quadratic formula implies the result.
\end{proof}

\def\cprime{$'$}
\def\soft#1{\leavevmode\setbox0=\hbox{h}\dimen7=\ht0\advance \dimen7
  by-1ex\relax\if t#1\relax\rlap{\raise.6\dimen7
    \hbox{\kern.3ex\char'47}}#1\relax\else\if T#1\relax
  \rlap{\raise.5\dimen7\hbox{\kern1.3ex\char'47}}#1\relax \else\if
  d#1\relax\rlap{\raise.5\dimen7\hbox{\kern.9ex
      \char'47}}#1\relax\else\if D#1\relax\rlap{\raise.5\dimen7
    \hbox{\kern1.4ex\char'47}}#1\relax\else\if l#1\relax
  \rlap{\raise.5\dimen7\hbox{\kern.4ex\char'47}}#1\relax \else\if
  L#1\relax\rlap{\raise.5\dimen7\hbox{\kern.7ex
      \char'47}}#1\relax\else\message{accent \string\soft \space #1
    not defined!}#1\relax\fi\fi\fi\fi\fi\fi}

\end{document}